\documentclass[letterpaper,11pt,twoside,keywordsasfootnote,addressatend,noinfoline]{article}
\usepackage{fullpage}
\usepackage[english]{babel}
\usepackage{amssymb}
\usepackage{amsmath}
\usepackage{comment}
\usepackage{theorem}
\usepackage{epsfig}
\usepackage{subfigure}
\usepackage{mathrsfs}
\usepackage{color}
\usepackage{imsart}

\newtheorem{theorem}{Theorem}
\newtheorem{lemma}[theorem]{Lemma}

\newenvironment{proof}{\noindent{\bf Proof.}}{\hspace*{2mm}~$\square$}

\newcommand{\ind}{\mathbf{1}}
\newcommand{\ep}{\epsilon}
\newcommand{\n}{\hspace*{-5pt}}

\DeclareMathOperator{\card}{card}
\DeclareMathOperator{\uniform}{Uniform}


\begin{document}

\begin{frontmatter}
\title     {Limiting behavior of a kindness model}
\runtitle  {Limiting behavior of a kindness model}
\author    {Nicolas Lanchier\thanks{Nicolas Lanchier was partially supported by NSF grant CNS-2000792.} and Max Mercer}
\runauthor {Nicolas Lanchier and Max Mercer}
\address   {School of Mathematical and Statistical Sciences \\ Arizona State University, Tempe, AZ 85287, USA. \\ nicolas.lanchier@asu.edu \\ mamerce1@asu.edu}

\maketitle

\begin{abstract} \ \
 This paper is concerned with a stochastic model for the spread of kindness across a social network.
 Individuals are located on the vertices of a general finite connected graph, and are characterized by their kindness belief.
 Each individual, say~$x$, interacts with each of its neighbors, say~$y$, at rate one.
 The interactions can be kind or unkind, with kind interactions being more likely when the kindness belief of the sender~$x$ is high.
 In addition, kind interactions increase the kindness belief of the recipient~$y$, whereas unkind interactions decrease its kindness belief.
 The system also depends on two parameters modeling the impact of kind and unkind interactions, respectively.
 We prove that, when kind interactions have a larger impact than unkind interactions, the system converges to the purely kind configuration with probability tending to one exponentially fast in the large population limit.
\end{abstract}

\begin{keyword}[class=AMS]
\kwd[Primary ]{60K35, 91D25.}
\end{keyword}

\begin{keyword}
\kwd{Interacting particle systems; Optional stopping theorem; Kindness belief.}
\end{keyword}

\end{frontmatter}


\section{Introduction}
 This paper explores a stochastic model for the spread of kindness/unkindness based on the following two natural assumptions:
 individuals who have experienced kindness/unkindness frequently are more likely to express kindness/unkindness to others, and individuals who are accustomed to kind/unkind interactions are more surprised/affected by unkind/kind interactions.
 The model also depends on two parameters measuring the general impact of kind versus unkind interactions.
 The main objective of this paper is to study the limiting behavior of a spatially explicit stochastic model that describes the spread of kindness/unkindness across a social network and whose dynamics is governed by the two assumptions above. \\
\indent
 Our model is based on the framework of interacting particle systems.
 For an exposition of the early models in this topic, we refer to~\cite{lanchier_2024, liggett_1985, liggett_1999}.
 More precisely, we assume that the individuals are located on the set of vertices of a general finite connected graph~$G = (V, E)$ representing a social network.
 The individuals at vertices~$x$ and~$y$ can interact if and only if they are neighbors~(connected by an edge), which we write~$x \sim y$ from now on.
 The kindness belief of the individuals resulting from the interactions they have experienced from their neighbors is represented by a number in the interval~$[-1, +1]$, with state~$-1$ referring to completely unkind, and state~$+1$ referring to completely kind.
 In particular, the state of the system at time~$t \in [0, \infty)$ is a configuration
 $$ \xi_t : V \longrightarrow [-1, +1] \quad \hbox{where} \quad \xi_t (x) = \hbox{kindness belief at vertex~$x$ at time~$t$}. $$
 To fix the ideas, we assume that the initial kindness beliefs across the network are independent and uniformly distributed in the interval~$[-1, +1]$.
 Each individual, say~$x$, interacts with each of its neighbors, say~$y$, at rate one.
 The interaction is either kind or unkind, more likely to be kind/unkind if the kindness belief of~$x$ is high/low, and we assume that the interaction is
 $$ \hbox{kind with probability} \ 1/2 + \xi (x) / 2 \quad \hbox{and} \quad \hbox{unkind with probability} \ 1/2 - \xi (x) / 2. $$
 As a result of a kind/unkind interaction, the kindness belief of the recipient~$y$ increases/decreases, with the effect of the interaction being more pronounced if the type of the interaction does not align with the current kindness belief of~$y$.
 More precisely, having two parameters~$\mu_+, \mu_- \in [0, 1]$ modeling the sensitivity of the individuals to a kind/unkind interaction,
 $$ \xi_t (y) \to \xi_t (y) + \mu_+ (1 - \xi_t (y)) \quad \hbox{or} \quad \xi_t (y) \to \xi_t (y) - \mu_- (1 + \xi_t (y)), $$
 depending on whether the interaction is kind or unkind, respectively.
 In short, each oriented edge~$\vec{xy}$ of the graph/social network becomes active at rate one, at which time
 $$ \xi_t (y) = \left\{\begin{array}{rcl}
    \xi_{t-} (y) + \mu_+ (1 - \xi_{t-} (y)) & \hbox{with probability} & 1/2 + \xi_{t-} (x) / 2, \vspace*{4pt} \\
    \xi_{t-} (y) - \mu_- (1 + \xi_{t-} (y)) & \hbox{with probability} & 1/2 - \xi_{t-} (x) / 2. \end{array} \right. $$
 Note that, starting with all the individuals in state~$-1$ or~$+1$, the special case~$\mu_- = \mu_+ = 1$ reduces to the popular voter model~\cite{clifford_sudbury_1973, holley_liggett_1975}.
 To fix the ideas~(and stay optimistic), we assume from now on that~$\mu_- < \mu_+$, but in view of the obvious symmetry of our kindness model, our analysis easily extends to the pessimist case~$\mu_- > \mu_+$ by simply replacing~$\xi$ with~$- \xi$.
 Our main result shows that, with probability tending to one in the large population limit, the system converges to the purely kind configuration~$\ind_V$.
 To state our result more precisely, let~$X_t = \sum_{x \in V} \xi_t (x)$ be the process that keeps track of the overall kindness of the system, and
 $$ T_{\ep}^- = \inf \{t : X_t < - \ep N \} \quad \hbox{and} \quad T_{\ep}^+ = \inf \{t : X_t > (1 - \ep) N \}, $$
 where~$N = \card (V)$ refers to the population size.
\begin{theorem}
\label{th:kindness}
 Assume that~$\mu_- < \mu_+$.
\begin{itemize}
\item With probability one, $\xi_t \to + \ind_V$ or \ $\xi_t \to - \ind_V$. \vspace*{4pt}
\item For all~$0 < \ep < 1/2$, there exists~$c_{\ep} > 1$ such that~$P (T_{\ep}^- < T_{\ep}^+) \leq c_{\ep}^{- \ep N / 2}$.
\end{itemize}
\end{theorem}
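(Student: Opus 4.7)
The plan divides naturally along the two bullets. A direct generator calculation gives that when the oriented edge $\vec{xy}$ fires, the expected change in $X_t$ is $\tfrac{1}{2}(\mu_+-\mu_-)(1-\xi(x)\xi(y))+\tfrac{1}{2}(\mu_++\mu_-)(\xi(x)-\xi(y))$; summing over the two orientations of each edge cancels the antisymmetric piece and yields
\[
\frac{d}{dt}\,E[X_t \mid \xi_t] \,=\, (\mu_+-\mu_-)\sum_{\{x,y\}\in E}(1-\xi_t(x)\xi_t(y)) \,=:\, (\mu_+-\mu_-)\,\phi(\xi_t) \,\ge\, 0,
\]
with $\phi$ vanishing precisely on $\{+\ind_V,-\ind_V\}$ by connectedness of $G$. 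Thus $X_t$ is a bounded submartingale converging a.s.\ to some $X_\infty$; its compensator also converges, so $\int_0^\infty \phi(\xi_s)\,ds<\infty$ a.s. Since inter-event times are i.i.d.\ exponential with rate $2\,\card(E)$ independent of the configuration, this integrability forces $\phi(\xi_t)\to 0$, so $\xi_t$ approaches $\{+\ind_V,-\ind_V\}$; together with $X_t\to X_\infty$ and $X=\pm N$ at those two points, this gives $X_\infty\in\{-N,+N\}$ and therefore $\xi_t\to\pm\ind_V$ a.s.

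For the quantitative estimate I would construct an exponential supermartingale $M_t:=\lambda^{X_t}$ with $\lambda\in(0,1)$ close to $1$ depending only on $\mu_\pm$. Computing the generator edge by edge, pairing the two orientations of each unoriented edge $\{x,y\}$, and Taylor-expanding in $\delta:=1-\lambda$, the first-order term equals $-\delta(\mu_+-\mu_-)(1-\xi(x)\xi(y))\le 0$, mirroring the drift of $X_t$. The key technical claim is that the quadratic remainder is dominated on $[-1,1]^2$ by $C\delta^2(1-\xi(x)\xi(y))$ for some $C=C(\mu_+,\mu_-)$: this rests on the fact that the paired summand vanishes identically in $\lambda$ at the two diagonal corners $(\pm 1,\pm 1)$, because the kind and unkind jump sizes carry the factors $1-\xi(y)$ and $1+\xi(y)$ that vanish there. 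Choosing $\delta<(\mu_+-\mu_-)/C$ then makes $M_t$ a genuine supermartingale.

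Given the supermartingale, the optional stopping theorem applied to the bounded stopping times $t\wedge T_\ep^-\wedge T_\ep^+$, together with $M_{T_\ep^-}\ge\lambda^{-\ep N}$ and $M\ge 0$, yields after letting $t\to\infty$
\[
P(T_\ep^-<T_\ep^+) \,\le\, \lambda^{\ep N}\,E[\lambda^{X_0}] \,=\, \bigl(\lambda^{\ep}\,E[\lambda^{U}]\bigr)^{N}
\]
with $U\sim\uniform[-1,1]$. A one-line integration gives $E[\lambda^U]=\sinh(s)/s$ where $s:=|\ln\lambda|$, so the base equals $e^{-\ep s}\sinh(s)/s=1-\ep s+O(s^2)$, which is strictly less than $1$ once $s$ is small; taking $s$ in the intersection of this range with the supermartingale range $s<-\ln(1-(\mu_+-\mu_-)/C)$ and setting $c_\ep:=(e^{-\ep s}\sinh(s)/s)^{-2/\ep}>1$ gives the stated bound $c_\ep^{-\ep N/2}$.

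The main obstacle will be the uniform quadratic-remainder estimate in the second paragraph: one has to show by explicit calculation that the second $\lambda$-derivative of the paired generator summand is bounded on $[-1,1]^2$ by a constant times $1-\xi(x)\xi(y)$, exploiting the identical vanishing of the paired summand at the corners $(\pm 1,\pm 1)$. Once this estimate is in place, the remaining ingredients---the Laplace transform of the uniform distribution, the optional stopping argument, and the optimization in $s$---are elementary.
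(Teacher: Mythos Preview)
Your proposal is correct and closely parallels the paper, with one genuine difference in the construction of the exponential supermartingale. For the first bullet, the paper also shows $X_t$ is a bounded submartingale and invokes martingale convergence, but then argues directly that $\pm\ind_V$ are the only absorbing states; your route via finiteness of the compensator and $\phi(\xi_t)\to 0$ is a valid alternative (the implication ``$\int_0^\infty\phi<\infty\Rightarrow\phi(\xi_t)\to 0$'' does use the independence of the Poisson holding times from the embedded chain and deserves a sentence). For the second bullet the paper does \emph{not} build a global supermartingale: it first stops $X_t$ at $T_\ep=T_\ep^-\wedge T_\ep^+$ and proves a short combinatorial lemma---before $T_\ep$ there is always an edge with $\xi_t(x_t)\xi_t(y_t)\le 1-\ep$---which forces the derivative of the exponential generator at $c=1$ to be at most $-(\mu_+-\mu_-)\ep$ uniformly in $t<T_\ep$, whence a $c_\ep>1$ making the stopped process a supermartingale. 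Your edge-by-edge remainder bound $|R|\le C\delta^2(1-\xi(x)\xi(y))$ replaces this device entirely and yields a supermartingale for all $\lambda$ in a range depending only on $\mu_\pm$; the claim is correct (the paired summand vanishes identically in $\lambda$ at the diagonal corners, so its second $\lambda$-derivative vanishes there and is Lipschitz in $(\xi(x),\xi(y))$, while $1-\xi(x)\xi(y)$ vanishes only to first order at those two points), but carrying it out is a genuine compactness argument rather than a one-liner. What you gain is a graph-independent exponential parameter and no need for the edge-existence lemma; what the paper gains is that its lemma is completely elementary. The endgame---$E[\lambda^U]=\sinh(s)/s$, optional stopping, and the small-$s$ expansion showing $e^{-\ep s}\sinh(s)/s<1$---matches the paper's.
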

 In words, the system converges almost surely to the completely kind or to the completely unkind configuration.
 In addition, the probability that the average kindness goes below~$- \ep$ before it goes above~$1 - \ep$ decays exponentially with the population size. 


\section{Proof of the theorem}
 Depending on the configuration of the system, an interaction along~$\vec{xy}$ may increase or decrease the kindness of~$y$ on average.
 The first key to proving the theorem is to observe that, in the optimistic scenario~$\mu_- < \mu_+$, the combined effects of an interaction along~$\vec{xy}$ and an interaction along~$\vec{yx}$ always increase the overall kindness on average.
 In particular, the process~$X_t$ is a submartingale~(Lemma~\ref{lem:X}), so an application of the martingale convergence theorem implies that the process converges almost surely to either the completely kind configuration or the completely unkind configuration~(Lemma~\ref{lem:convergence}).
 More generally, as long as the overall kindness is not too high or too low, we can find~$c > 1$ such that~$c^{- X_t}$ is a supermartingale~(Lemmas~\ref{lem:drift}--\ref{lem:Y}).
 Applying the optional stopping theorem to this supermartingale, we deduce that, with probability tending to one in the large population limit~$N \to \infty$, the average kindness belief across the social network becomes arbitrarily high before decreasing even slightly~(Lemmas~\ref{lem:calculus}--\ref{lem:ost}).
\begin{lemma}
\label{lem:X}
 The process~$X_t = \sum_{x \in V} \xi_t (x)$ is a submartingale.
\end{lemma}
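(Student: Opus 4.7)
The plan is to show directly that the generator of $X_t$ is non-negative, i.e., for any configuration $\xi$, $\mathcal{L} X(\xi) \geq 0$. Since $X_t$ is uniformly bounded by $N$, this will imply that $X_t$ is a submartingale.

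First I would single out the contribution to the drift coming from one oriented edge $\vec{xy}$. Only $\xi(y)$ is updated, so the expected instantaneous change in $X_t$ due to $\vec{xy}$ becoming active equals the expected change in $\xi(y)$, namely
\[
 \Delta_{\vec{xy}}(\xi) := \frac{1+\xi(x)}{2}\,\mu_+(1-\xi(y)) - \frac{1-\xi(x)}{2}\,\mu_-(1+\xi(y)).
\]
This quantity need not be non-negative by itself, which is why I would next follow the hint of the authors and pair the edge $\vec{xy}$ with its reverse $\vec{yx}$.

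Setting $a = \xi(x)$ and $b = \xi(y)$, I would expand
\[
 \Delta_{\vec{xy}}(\xi) + \Delta_{\vec{yx}}(\xi) = \frac{\mu_+}{2}\bigl[(1+a)(1-b) + (1+b)(1-a)\bigr] - \frac{\mu_-}{2}\bigl[(1-a)(1+b) + (1-b)(1+a)\bigr].
\]
Both bracketed expressions equal $2(1-ab)$, so the pair contributes exactly
\[
 (\mu_+ - \mu_-)\,(1 - \xi(x)\,\xi(y)).
\]
Since $\xi(x),\xi(y) \in [-1,+1]$, we have $1 - \xi(x)\xi(y) \geq 0$, and by assumption $\mu_+ - \mu_- > 0$, so each unordered edge contributes a non-negative drift.

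Summing over unordered edges gives
\[
 \mathcal{L} X(\xi) = (\mu_+ - \mu_-) \sum_{\{x,y\} \in E} (1 - \xi(x)\,\xi(y)) \geq 0,
\]
which establishes the submartingale property. There is no real obstacle here: the computation is a short bilinear identity, and the only thing to be careful about is correctly pairing $\vec{xy}$ with $\vec{yx}$ so that the unkind terms (which have the wrong sign) cancel against their kind counterparts in the right way. This same expression for $\mathcal{L} X$ will presumably be useful again for the later supermartingale argument on $c^{-X_t}$.
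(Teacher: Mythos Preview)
Your proposal is correct and follows essentially the same approach as the paper: you compute the expected increment along each oriented edge, pair $\vec{xy}$ with $\vec{yx}$ to obtain the identity $(\mu_+-\mu_-)(1-\xi(x)\xi(y))\ge 0$, and sum over unordered edges to conclude that the drift of $X_t$ is non-negative. The paper carries out the identical computation (with the notation $\rho_{\vec{xy}}^t$ in place of your $\Delta_{\vec{xy}}(\xi)$), and indeed reuses this expression in the later supermartingale lemma just as you anticipate.
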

\begin{proof}
 For each oriented edge~$\vec{xy}$, define
 $$ \rho_{\vec{xy}}^t = (1/2 + \xi_t (x) / 2) \,\mu_+ (1 - \xi_t (y)) - (1/2 - \xi_t (x) / 2) \,\mu_- (1 + \xi_t (y)). $$
 Some simple algebra implies that
 $$ \begin{array}{rcl}
    \rho_{\vec{xy}}^{t} + \rho_{\vec{yx}}^{t} & \n = \n &
         (1/2 + \xi_t (x) / 2) \,\mu_+ (1 - \xi_t (y)) - (1/2 - \xi_t (x) / 2) \,\mu_- (1 + \xi_t (y)) \vspace*{4pt} \\ && \hspace*{20pt} + \
         (1/2 + \xi_t (y) / 2) \,\mu_+ (1 - \xi_t (x)) - (1/2 - \xi_t (y) / 2) \,\mu_- (1 + \xi_t (x)) \vspace*{4pt} \\ & \n = \n &
         (1/2)(\mu_+ - \mu_-)((1 + \xi_t (x))(1 - \xi_t (y)) + (1 - \xi_t (x))(1 + \xi_t (y))) \vspace*{4pt} \\ & \n = \n &
         (\mu_+ - \mu_-)(1 - \xi_t (x) \,\xi_t (y)) \geq 0. \end{array} $$
 Now, assuming that~$\vec{xy}$ becomes active at time~$t$,
 $$ \begin{array}{l}
      E (X_t - X_{t-} \,| \,\xi_{t-}) =
      E (\xi_t (y) - \xi_{t-} (y) \,| \,\xi_{t-}) \vspace*{4pt} \\ \hspace*{40pt} =
        (1/2 + \xi_{t-} (x) / 2) \,\mu_+ (1 - \xi_{t-} (y)) - (1/2 - \xi_{t-} (x) / 2) \,\mu_- (1 + \xi_{t-} (y)) = \rho_{\vec{xy}}^{t-}, \end{array} $$
 and since each oriented edge becomes active at rate one,
 $$ \begin{array}{rcl}
    \lim_{s \to 0} \,s^{-1} E (X_{t + s} - X_t \,| \,\xi_t) & \n = \n &
    \sum_{\vec{xy}} \,\rho_{\vec{xy}}^t = \sum_{xy} \,(\rho_{\vec{xy}}^t + \rho_{\vec{yx}}^t) \vspace*{4pt} \\ & \n = \n &
    (\mu_+ - \mu_-) \,\sum_{xy} \,(1 - \xi_t (x) \,\xi_t (y)) \geq 0. \end{array} $$
 This shows that~$X_t$ is a submartingale.
\end{proof} \\ \\
 Using Lemma~\ref{lem:X}, we can deduce that the process converges almost surely to either the completely kind configuration or the completely unkind configuration.
\begin{lemma}
\label{lem:convergence}
 With probability one, $\xi_t \to + \ind_V$ or~$\xi_t \to - \ind_V$.
\end{lemma}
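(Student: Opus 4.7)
The plan is to apply the martingale convergence theorem to the submartingale~$X_t$ established in Lemma~\ref{lem:X}, and then to upgrade the convergence of the total kindness~$X_t$ to the pointwise convergence of the configuration~$\xi_t$. Since~$X_t$ is a submartingale bounded in~$[-N, N]$, Doob's convergence theorem gives $X_t \to X_\infty$ almost surely for some random variable~$X_\infty \in [-N, N]$. Moreover, the drift computation in the proof of Lemma~\ref{lem:X} shows that $X_t - A_t$ is a martingale, where
$$ A_t = (\mu_+ - \mu_-) \int_0^t \eta_s \, ds, \qquad \eta_s := \sum_{\{x,y\} \in E} (1 - \xi_s (x) \xi_s (y)) \geq 0. $$
Since both~$X_t$ and the martingale~$X_t - A_t$ are bounded, the compensator~$A_\infty$ is finite almost surely, and therefore $\int_0^\infty \eta_s \, ds < \infty$ almost surely.

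From this integrability and $\eta_s \geq 0$, there exists, on a set of probability one, a (random) sequence $t_n \uparrow \infty$ along which $\eta_{t_n} \to 0$; indeed, if $\eta_t \geq \delta$ eventually for some $\delta > 0$, the integral would diverge. For such~$n$, each nonnegative summand in the sum defining~$\eta_{t_n}$ is at most~$\eta_{t_n}$, so $\xi_{t_n} (x) \xi_{t_n} (y) \geq 1 - \eta_{t_n}$ for every edge~$\{x,y\} \in E$. Combined with $|\xi_{t_n} (v)| \leq 1$, this forces $|\xi_{t_n} (v)| \geq 1 - \eta_{t_n}$ for every vertex~$v$ and, using the connectivity of~$G$, all the~$\xi_{t_n} (v)$ share a common sign $\sigma_n \in \{-1, +1\}$. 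Consequently $X_{t_n} = N \sigma_n + o(1)$, and since $X_{t_n} \to X_\infty$ with $N \sigma_n \in \{-N, +N\}$, the sequence~$\sigma_n$ must eventually equal some~$\sigma_\infty \in \{-1, +1\}$ and $X_\infty = N \sigma_\infty$ almost surely.

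Finally, the nonnegative quantity $\sum_v (1 - \sigma_\infty \xi_t (v)) = N - \sigma_\infty X_t$ tends to~$0$ as $t \to \infty$; since each summand is nonnegative, every term $1 - \sigma_\infty \xi_t (v) \to 0$, so $\xi_t (v) \to \sigma_\infty$ for every vertex~$v$, that is, $\xi_t \to \sigma_\infty \ind_V$ almost surely. The main technical step to watch is the combinatorial argument where the connectivity of~$G$ is used to extract a common sign of the~$\xi_{t_n} (v)$ along the subsequence; the remaining pieces reduce to standard martingale theory and elementary estimates.
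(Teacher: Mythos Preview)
Your proof is correct and takes a genuinely different route from the paper. The paper argues that the bounded submartingale $X_t$ converges almost surely by Doob's theorem, and then asserts (somewhat informally) that the limit must correspond to an absorbing state; it finishes by checking, via a case analysis on whether some vertex has state in $(-1,1)$, that $\pm\ind_V$ are the only absorbing configurations. Your approach is more quantitative: you extract the compensator $A_t$ explicitly, use its almost-sure finiteness to force $\eta_{t_n}\to 0$ along a subsequence, and then use the edge-wise inequality $\xi_{t_n}(x)\,\xi_{t_n}(y)\geq 1-\eta_{t_n}$ together with connectivity to pin down $X_\infty\in\{-N,N\}$. This sidesteps the slightly slippery identification of ``$X_t$ converges'' with ``the configuration settles at an absorbing state,'' and your final step, deducing $\xi_t(v)\to\sigma_\infty$ from $N-\sigma_\infty X_t\to 0$ via nonnegativity of the summands, is clean.

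One small slip: you write ``since both $X_t$ and the martingale $X_t-A_t$ are bounded,'' but $X_t-A_t$ is not obviously bounded below a priori (that would presuppose $A_\infty<\infty$). The conclusion $A_\infty<\infty$ a.s.\ is nonetheless correct: either note that $E[A_t]=E[X_t]-E[X_0]\leq 2N$ and apply monotone convergence, or observe that $X_t-A_t\leq N$ makes $N-(X_t-A_t)$ a nonnegative martingale, which converges a.s., whence $A_t=X_t-(X_t-A_t)$ converges to a finite limit.
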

\begin{proof}
 Because~$X_t$ is a submartingale with absolute value bounded by~$N$, it follows from the martingale convergence theorem~(see, e.g., \cite[Theorem 5.2]{lanchier_2017}) that~$X_t$ converges almost surely to a random variable~$X_{\infty}$.
 In particular, for almost all realizations~$\omega$, the sequence~$X_t (\omega)$ converges to a limit corresponding to an absorbing state.
 When~$X_t = N$, or equivalently~$\xi_t = + \ind_V$, all the interactions are kind and do not change the configuration so~$+ \ind_V$ is an absorbing state.
 Similarly,~$- \ind_V$ is an absorbing state.
 In contrast, if~$|X_t| \neq N$, we have the following alternative:
\begin{itemize}
\item
 We have~$\xi_{t-} (x) \in (-1, +1)$ for some~$x \in V$, in which case the probability of a kind/unkind interaction along~$\vec{xy}$ are both positive, and result respectively in
 $$ \xi_t (y) = \left\{\begin{array}{rcl}
    \xi_{t-} (y) + \mu_+ (1 - \xi_{t-} (y)) > \xi_{t-} (y) & \hbox{if} & \xi_{t-} (y) \neq +1, \vspace*{4pt} \\
    \xi_{t-} (y) - \mu_- (1 + \xi_{t-} (y)) < \xi_{t-} (y) & \hbox{if} & \xi_{t-} (y) \neq -1. \end{array} \right. $$
\item
 We have~$\xi_{t-} (z) \in \{-1, +1 \}$ for all~$z \in V$, in which case~$\xi_{t-} (x) = - 1$ and~$\xi_{t-} (y) = + 1$ for some~$x, y \in V$.
 Moving along a path connecting~$x$ and~$y$, which exists because the graph is connected, we can find two neighbors~$x_t \sim y_t$ that satisfy the same conditions.
 An unkind interaction~$x_t \to y_t$ occurs at rate one, and changes the state at~$y_t$.
\end{itemize}
 This shows that~$\pm \ind_V$ are the only two absorbing states, and completes the proof.
\end{proof} \\ \\
 For all~$0 < \ep < 1/2$, let~$T_{\ep} = \min (T_{\ep}^-, T_{\ep}^+)$ be the first time the average kindness across the social network goes below~$- \ep$ or above~$1 - \ep$.
 The next lemma will be used later to prove that the process~$c^{- X_t}$ stopped at time~$T_{\ep}$ is a supermartingale.
\begin{lemma}
\label{lem:drift}
 For all times~$t < T_{\ep}$,
 $$ \xi_t (x_t) \,\xi_t (y_t) \leq 1 - \ep \quad \hbox{for some neighbors} \quad x_t \sim y_y. $$
\end{lemma}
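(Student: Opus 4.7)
The plan is to argue by contradiction: suppose that at some time $t < T_\epsilon$ every edge $xy \in E$ satisfies $\xi_t(x)\,\xi_t(y) > 1-\epsilon$, and derive that the average kindness must already lie outside $[-\epsilon, 1-\epsilon]$, contradicting $t<T_\epsilon$.

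The first step is to convert this edge-wise hypothesis into a pointwise statement. Since $1-\epsilon>0$, the product $\xi_t(x)\xi_t(y)>1-\epsilon$ forces $\xi_t(x)$ and $\xi_t(y)$ to share the same (nonzero) sign on every edge. Moreover, from $\xi_t(x)\xi_t(y)\leq|\xi_t(x)|\,|\xi_t(y)|$ together with $|\xi_t(\cdot)|\leq 1$, each of $|\xi_t(x)|$ and $|\xi_t(y)|$ must strictly exceed $1-\epsilon$. Because $G$ is connected, I can propagate both the common sign and the lower bound along any path, obtaining a single $s\in\{-1,+1\}$ such that $s\,\xi_t(z)>1-\epsilon$ for every $z\in V$.

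The second step is to sum this pointwise bound and contradict the definition of $T_\epsilon$. If $s=+1$, then $X_t=\sum_{z\in V}\xi_t(z)>(1-\epsilon)N$, which forces $T_\epsilon^+\leq t$ and contradicts $t<T_\epsilon\leq T_\epsilon^+$. If $s=-1$, then $X_t<-(1-\epsilon)N$, and since $\epsilon<1/2$ gives $1-\epsilon>\epsilon$, we get $X_t<-\epsilon N$, forcing $T_\epsilon^-\leq t$ and contradicting $t<T_\epsilon\leq T_\epsilon^-$. Either way we reach a contradiction, so there must exist neighbors $x_t\sim y_t$ with $\xi_t(x_t)\,\xi_t(y_t)\leq 1-\epsilon$, as claimed.

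I do not anticipate a real obstacle here; this is a short, almost book-keeping argument. The only points that deserve a bit of care are the use of connectedness to extend the sign and the lower bound from a single edge to all of $V$, and the explicit use of $\epsilon<1/2$ in the $s=-1$ case to turn the bound $-(1-\epsilon)N$ into something strictly below $-\epsilon N$.
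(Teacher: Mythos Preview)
Your argument is correct. The paper uses the same ingredients (the definition of $T_{\ep}$ together with connectedness of $G$), but packages them as a direct case analysis rather than a contradiction: it splits on whether some vertex lies in $[-\ep,1-\ep]$ (in which case any incident edge works, since $|\xi_t(x_t)|\le 1-\ep$) or all vertices lie outside that interval (in which case, since the sum constraint forces both low and high values to occur, connectedness produces adjacent vertices of opposite sign). Your contrapositive version is arguably cleaner, as it avoids the two cases and yields the uniform sign and the lower bound $|\xi_t(z)|>1-\ep$ in one stroke; the paper's version is slightly more constructive, pointing explicitly to the edge that witnesses the bound.
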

\begin{proof}
 The proof is similar to the proof of the previous lemma.
 Before time~$T_{\ep}$, the states of the individuals cannot be all below~$- \ep$ or all above~$1 - \ep$, which leads to the following alternative:
\begin{itemize}
\item
 We have~$\xi_t (x_t) \in [- \ep, 1 - \ep]$ for some~$x_t \in V$, in which case
 $$ \xi_t (x_t) \,\xi_t (y_t) \leq |\xi_t (x_t)| \leq \max(\ep, 1 - \ep) = 1 - \ep \quad \hbox{for all} \quad y_t \sim x_t. $$
\item
 We have~$\xi_t (z) \notin [- \ep, 1 - \ep]$ for all~$z \in V$, in which case~$\xi_t (x) \leq - \ep$ and~$\xi_t (y) \geq 1 - \ep$ for some~$x, y \in V$.
 Moving along a path connecting~$x$ and~$y$, we can find two neighbors~$x_t \sim y_t$ that satisfy the same conditions, therefore~$\xi_t (x_t) \,\xi_t (y_t) \leq - \ep (1 - \ep) \leq 0 \leq 1 - \ep$.
\end{itemize}
 This completes the proof.
\end{proof} \\ \\
 Let~$Y_t = X_t^{T_{\ep}}$ be the process~$X_t$ stopped at time~$T_{\ep}$.
\begin{lemma}
\label{lem:Y}
 For all~$\ep > 0$, there exists~$c_{\ep} > 1$ such that~$c_{\ep}^{- Y_t}$ is a supermartingale.
\end{lemma}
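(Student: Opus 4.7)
My plan is to compute the infinitesimal generator of $c^{-Y_t}$ for $t < T_\ep$ and exploit the edge-pairing trick from Lemma~\ref{lem:X}. Since activation of $\vec{xy}$ multiplies $c^{-Y_t}$ by either $c^{-\mu_+(1-\xi(y))}$ or $c^{\mu_-(1+\xi(y))}$ with probabilities $1/2 \pm \xi(x)/2$, the generator has the form $c^{-Y_t} \sum_{xy} h_{xy}(c)$, where $h_{xy}(c)$ collects the expected multiplicative increment from both orientations of the edge. It then suffices to find $c_\ep > 1$ such that $\sum_{xy} h_{xy}(c_\ep) \leq 0$ for every configuration reachable before $T_\ep$.

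Two observations get the plan off the ground. First, $h_{xy}(1) = 0$. Second, differentiating at $c = 1$ reproduces the calculation in Lemma~\ref{lem:X}: the linear coefficient in $c - 1$ equals $-(\rho_{\vec{xy}}^t + \rho_{\vec{yx}}^t) = -(\mu_+ - \mu_-)(1 - \xi(x)\xi(y))$. Combined with Lemma~\ref{lem:drift}, the linear drift summed over edges is at most $-(\mu_+-\mu_-)\ep(c-1)$, so the aim is to absorb the higher-order remainder into this negative drift.

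The main obstacle is doing this uniformly in the graph size: a naive Taylor bound has a quadratic remainder whose coefficient scales with $|E|$ once summed, which would force $c_\ep \to 1$ as $N \to \infty$ and make Theorem~\ref{th:kindness} vacuous. The resolution I propose is to show that the remainder itself factors through $(1 - \xi(x)\xi(y))$, edge by edge. Specifically, the quadratic coefficient in $h_{xy}$ is a sum of four terms of the shape $(1/2 \pm \xi/2)\mu_\pm^2 (1 \mp \xi)^2$, and the elementary inequality $(1+\alpha)(1-\beta)^2 \leq 4(1 - \alpha\beta)$ on $[-1,1]^2$, together with the symmetric variants obtained by $\alpha \mapsto -\alpha, \beta \mapsto -\beta$ and $\alpha \leftrightarrow \beta$, bounds each summand by a $\mu_\pm$-dependent multiple of $(1 - \xi(x)\xi(y))$. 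Applying the uniform estimate $|e^x - 1 - x| \leq x^2 e^{|x|}/2$ to control the tail of the expansion should then yield
$$ h_{xy}(c) \leq (1 - \xi(x)\xi(y))(\ln c)\bigl[-(\mu_+ - \mu_-) + C \ln c\bigr] $$
for $c$ in a bounded range, with $C$ depending only on $\mu_+, \mu_-$.

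The conclusion is then immediate: I would choose $u_0 > 0$ small enough that the bracket is at most $-(\mu_+-\mu_-)/2$ for all $\ln c \in [0, u_0]$, and set $c_\ep = e^{u_0}$. Then $h_{xy}(c_\ep) \leq 0$ edgewise, while Lemma~\ref{lem:drift} provides one edge with $1 - \xi_t(x_t)\xi_t(y_t) \geq \ep$, so the generator is at most $-\tfrac{1}{2}(\mu_+-\mu_-) u_0 \ep \cdot c_\ep^{-Y_t}$ before $T_\ep$ and identically zero after, establishing the supermartingale property with $c_\ep$ depending only on $\mu_+, \mu_-, \ep$.
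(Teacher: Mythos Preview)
Your overall architecture matches the paper's exactly: compute the generator of $c^{-Y_t}$, write it as $c^{-Y_t}\sum_{\vec{xy}}\phi_{\vec{xy}}^t(c)$, pair the two orientations of each edge, observe that the value at $c=1$ vanishes and that the derivative at $c=1$ reproduces $-(\rho_{\vec{xy}}^t+\rho_{\vec{yx}}^t)=-(\mu_+-\mu_-)(1-\xi(x)\xi(y))$, and invoke Lemma~\ref{lem:drift}. The paper then simply says ``because $c\mapsto\Phi_t(c)$ is smooth, we deduce the existence of $c_\ep>1$, fixed from now on, such that $\Phi_t(c_\ep)\le 0$,'' and moves on.

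The substantive difference is the step you add after that. You noticed, correctly, that the paper's smoothness remark only yields a $c_\ep$ depending on the particular configuration, or at best (via compactness of $[-1,1]^V$) on the fixed graph; a naive Taylor bound gives $|\Phi_t''|=O(|E|)$ against a first derivative bounded below only by $(\mu_+-\mu_-)\ep$, forcing $c_\ep-1=O(1/|E|)$ and rendering the bound $c_\ep^{-\ep N/2}$ in Theorem~\ref{th:kindness} useless as $N\to\infty$. Your remedy --- the inequality $(1+\alpha)(1-\beta)^2\le 4(1-\alpha\beta)$ on $[-1,1]^2$ and its symmetric variants, combined with $|e^x-1-x|\le \tfrac{x^2}{2}e^{|x|}$ --- shows that the entire remainder in each $h_{xy}$ carries the same factor $(1-\xi(x)\xi(y))$ as the linear term. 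This makes $h_{xy}(c)\le 0$ \emph{edgewise} for all $c\in(1,e^{u_0}]$ with $u_0$ depending only on $\mu_\pm$, so $c_\ep$ is genuinely uniform in the graph. The inequality is easily checked (chain $(1+\alpha)(1-\beta)^2\le 2(1+\alpha)(1-\beta)\le 4(1-\alpha\beta)$), and your argument is sound.

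In short: same route, but you have supplied the uniformity argument that the paper's ``by smoothness'' line glosses over, and that uniformity is exactly what Lemma~\ref{lem:ost} needs. Two small remarks: your argument actually shows any $c\in(1,e^{u_0}]$ works, which you will want when Lemma~\ref{lem:calculus} later forces $c_\ep$ close to $1$; and the strict negativity of the generator you extract from Lemma~\ref{lem:drift} is more than the lemma requires (nonpositivity edgewise already suffices), though it does no harm.
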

\begin{proof}
 To begin with, for all~$c > 0$, we define
 $$ \phi_{\vec{xy}}^t (c) = (1/2 + \xi_t (x) / 2) \,c^{- \mu_+ (1 - \xi_t (y))} + (1/2 - \xi_t (x) / 2) \,c^{+ \mu_- (1 + \xi_t (y))} - 1, $$
 and~$\Phi_t (c) = \sum_{\vec{xy}} \,\phi_{\vec{xy}}^t (c)$.
 Observe that~$\Phi_t (1) = \phi_{\vec{xy}}^t (1) = 0$, and that
 $$ \begin{array}{rcl}
     (\phi_{\vec{xy}}^t + \phi_{\vec{yx}}^t)'(1) & \n = \n &
      - (1/2 + \xi_t (x) / 2) \,\mu_+ (1 - \xi_t (y)) + (1/2 - \xi_t (x) / 2) \,\mu_- (1 + \xi_t (y)) \vspace*{4pt} \\ &&
      - (1/2 + \xi_t (y) / 2) \,\mu_+ (1 - \xi_t (x)) + (1/2 - \xi_t (y) / 2) \,\mu_- (1 + \xi_t (x)) \vspace*{4pt} \\ & \n = \n &
      - (\rho_{\vec{xy}}^t + \rho_{\vec{yx}}^t) = - (\mu_+ - \mu_-)(1 - \xi_t (x) \,\xi_t (y)). \end{array} $$
 In particular, by Lemma~\ref{lem:drift}, for all~$t < T_{\ep}$, there exist~$x_t \sim y_t$ such that
 $$ \begin{array}{rcl}
    \Phi_t' (1) & \n = \n &
    \sum_{xy} \,(\phi_{\vec{xy}}^t + \phi_{\vec{yx}}^t)'(1) =
      - (\mu_+ - \mu_-) \,\sum_{xy} \,(1 - \xi_t (x) \,\xi_t (y)) \vspace*{4pt} \\ & \n \leq \n &
      - (\mu_+ - \mu_-)(1 - \xi_t (x_t) \,\xi_t (y_t)) \leq
      - (\mu_+ - \mu_-) \,\ep < 0. \end{array} $$
 Because the function~$c \mapsto \Phi_t (c)$ is smooth, we deduce the existence of~$c_{\ep} > 1$, fixed from now on, such that~$\Phi_t (c_{\ep}) \leq 0$.
 Finally, assuming that~$\vec{xy}$ becomes active at time~$t$,
 $$ \begin{array}{rcl}
      E (c^{- Y_t} - c^{- Y_{t-}} \,| \,\xi_{t-}) & \n = \n &
      E (c^{- Y_{t-} - (\xi_t (y) - \xi_{t-} (y))} - c^{- Y_{t-}} \,| \,\xi_{t-}) \vspace*{4pt} \\ & \n = \n &
        (1/2 + \xi_{t-} (x) / 2) \,c^{- Y_{t-} - \mu_+ (1 - \xi_{t-} (y))} \vspace*{4pt} \\ & \n + \n &
        (1/2 - \xi_{t-} (x) / 2) \,c^{- Y_{t-} + \mu_- (1 + \xi_{t-} (y))} - c^{- Y_{t-}} =
      c^{- Y_{t-}} \phi_{\vec{xy}}^{t-} (c), \end{array} $$
 and since each oriented edge becomes active at rate one,
 $$ \begin{array}{l}
    \lim_{s \to 0} \,s^{-1} E (c_{\ep}^{- Y_{t + s}} - c_{\ep}^{- Y_t} \,| \,\xi_t) =
      c_{\ep}^{- Y_t} \,\sum_{\vec{xy}} \,\phi_{\vec{xy}}^t (c_{\ep}) \vspace*{4pt} \\ \hspace*{80pt} =
      c_{\ep}^{- Y_t} \,\sum_{xy} \,(\phi_{\vec{xy}}^t + \phi_{\vec{yx}}^t)(c_{\ep}) = c_{\ep}^{- Y_t} \,\Phi_t (c_{\ep}) \leq 0. \end{array} $$
 This shows that~$c_{\ep}^{- Y_t}$ is a supermartingale.
\end{proof} \\ \\
 To deduce from Lemma~\ref{lem:Y} the exponential decay of the probability~$P (T_{\ep}^- < T_{\ep}^+)$ in the second part of the theorem, we will need the following technical lemma.
\begin{lemma}
\label{lem:calculus}
 Let~$0 < \ep < 1/2$.
 Then, for all~$c > 1$ close to one,
 $$ \frac{1}{2} < \frac{c}{2 \ln (c)} - \frac{1}{2 c \ln (c)} = \frac{c^2 - 1}{2c \ln (c)} < c^{\ep / 2}. $$
\end{lemma}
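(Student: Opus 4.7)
The plan is to verify the middle equality by algebra and then establish the two strict bounds on $\frac{c^2-1}{2c\ln(c)}$ by different methods. The equality $\frac{c}{2\ln(c)}-\frac{1}{2c\ln(c)}=\frac{c^2-1}{2c\ln(c)}$ is immediate upon putting the two fractions over the common denominator $2c\ln(c)$.

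For the lower bound, I would recast $\frac{c^2-1}{2c\ln(c)}>\frac{1}{2}$ as the algebraic inequality $c^2-1>c\ln(c)$ valid for every $c>1$, not only for $c$ close to $1$. Setting $g(c)=c^2-1-c\ln(c)$, one has $g(1)=0$, $g'(c)=2c-\ln(c)-1$ with $g'(1)=1>0$, and $g''(c)=2-1/c\geq 1$ on $[1,\infty)$. So $g'$ is strictly increasing and positive on $[1,\infty)$, which forces $g$ to be strictly increasing there; combined with $g(1)=0$ this yields $g(c)>0$ and hence the lower bound for all $c>1$.

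For the upper bound, I would Taylor expand around $c=1$. Writing $c=1+h$ and using $\ln(1+h)=h-h^2/2+h^3/3+O(h^4)$, routine computation gives $2c\ln(c)=2h+h^2-h^3/3+O(h^4)$ and $c^2-1=2h+h^2$, so
$$ \frac{c^2-1}{2c\ln(c)}=1+\frac{h^2}{6}+O(h^3). $$
Meanwhile $c^{\ep/2}=(1+h)^{\ep/2}=1+\frac{\ep}{2}h+O(h^2)$. Since $\ep>0$ is fixed while $h\to 0^+$, the linear term $(\ep/2)h$ strictly dominates $h^2/6$, so $\frac{c^2-1}{2c\ln(c)}<c^{\ep/2}$ holds for every sufficiently small $h>0$, which is exactly the claim for $c>1$ close to one.

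There is no real conceptual obstacle. The only subtlety is that to isolate the departure of $\frac{c^2-1}{2c\ln(c)}$ from $1$, one must retain the cubic term in the expansion of $\ln(1+h)$, since the constant, linear, and quadratic contributions to the numerator and denominator coincide and the first genuine discrepancy only appears at order $h^3$ in the denominator; this is what forces the leading correction to be quadratic in $h$ and thus dominated by the linear term $(\ep/2)h$ of $c^{\ep/2}$.
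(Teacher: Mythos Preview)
Your argument is correct. For the upper bound you and the paper do essentially the same thing: both show that $\frac{c^2-1}{2c\ln c}$ and $c^{\ep/2}$ agree at $c=1$ while the latter leaves $1$ linearly and the former only quadratically; the paper phrases this as a derivative computation (showing the derivative of $\frac{c^2-1}{2c\ln c}$ vanishes at $c=1$ via the second-order Taylor expansion of $\ln c$), whereas you expand the quotient directly to find the $h^2/6$ correction. For the lower bound, however, your route is genuinely different and a bit stronger: the paper simply observes that $\lim_{c\to 1}\frac{c^2-1}{2c\ln c}=1>\tfrac12$ and invokes continuity, which only gives the inequality near $c=1$, while your monotonicity argument for $g(c)=c^2-1-c\ln c$ yields $\frac{c^2-1}{2c\ln c}>\tfrac12$ for every $c>1$. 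Your approach costs one extra derivative but buys a global bound; the paper's approach is shorter but local, which is all the lemma actually requires.
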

\begin{proof}
 Since~$\ln (c) = (c - 1) + o (c - 1)$,
 $$ \lim_{c \to 1} \ \bigg(\frac{c}{2 \ln (c)} - \frac{1}{2 c \ln (c)} \bigg) =
    \lim_{c \to 1} \ \frac{c^2 - 1}{2c (c - 1)} =
    \lim_{c \to 1} \ \frac{c + 1}{2c} = 1, $$
 which proves the first inequality for all~$c$ close to one.
 Note also that
 $$ \begin{array}{l}
    \displaystyle \frac{d}{dc} \bigg(\frac{c^2 - 1}{2c \ln (c)} \bigg) =
    \displaystyle \frac{4c^2 \ln (c) - 2 (c^2 - 1)(\ln (c) + 1)}{(2c \ln (c))^2} =
    \displaystyle \frac{(c^2 + 1) \ln (c) - (c^2 - 1)}{2c^2 \ln^2 (c)} \end{array} $$
 therefore, using that~$\ln (c) = (c - 1) - (c - 1)^2 / 2 + o (c - 1)^2$,
 $$ \begin{array}{rcl}
    \displaystyle \lim_{c \to 1} \ \frac{d}{dc} \bigg(\frac{c^2 - 1}{2c \ln (c)} \bigg) & \n = \n &
    \displaystyle \lim_{c \to 1} \ \frac{(c^2 + 1) \ln (c) - (c^2 - 1)}{2c^2 \ln^2 (c)} \vspace*{4pt} \\ & \n = \n &
    \displaystyle \lim_{c \to 1} \ \frac{(c^2 + 1)((c - 1) - (c - 1)^2/2) - (c^2 - 1)}{2 (c - 1)^2} \vspace*{4pt} \\ & \n = \n &
    \displaystyle \lim_{c \to 1} \ - \frac{c^3 - 3c^2 + 3c - 1}{4 (c - 1)} =
    \displaystyle \lim_{c \to 1} \ - \frac{(c - 1)^2}{4} = 0. \end{array} $$
 Since in addition~$c \mapsto c^{\ep / 2}$ is equal to one and has derivative~$\ep / 2 > 0$ at~$c = 1$, we deduce the second inequality in the lemma for all~$c > 1$ close to one.
 This completes the proof.
\end{proof} \\ \\
 Let~$p_{\ep}^- = P (T_{\ep} = T_{\ep}^-)$ and~$p_{\ep}^+ = P (T_{\ep} = T_{\ep}^+)$.
\begin{lemma}
\label{lem:ost}
 For all~$0 < \ep < 1/2$, there exists~$c_{\ep} > 1$ such that~$p_{\ep}^- \leq c_{\ep}^{- \ep N / 2}$.
\end{lemma}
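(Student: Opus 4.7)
The plan is to apply the optional stopping theorem to the bounded nonnegative supermartingale $c_\ep^{-Y_t}$ produced by Lemma~\ref{lem:Y}, choosing $c_\ep > 1$ close enough to one that the conclusion of Lemma~\ref{lem:calculus} also holds. The argument then boils down to matching an upper bound on $E(c_\ep^{-Y_0})$ coming from the uniform initial distribution against a lower bound on $E(c_\ep^{-Y_{T_\ep}})$ coming from the definition of $T_\ep^-$.

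First I would compute the initial Laplace transform. Since the initial beliefs $\xi_0(x)$ are independent and uniform on $[-1, +1]$, independence gives
\[
  E\bigl(c_\ep^{-Y_0}\bigr) = E\bigl(c_\ep^{-X_0}\bigr) = \Bigl(\tfrac{1}{2}\int_{-1}^{+1} c_\ep^{-u}\,du\Bigr)^{\n N} = \Bigl(\tfrac{c_\ep^2 - 1}{2 c_\ep \ln c_\ep}\Bigr)^{\n N},
\]
and the right-hand inequality of Lemma~\ref{lem:calculus} bounds this above by $c_\ep^{\ep N / 2}$, which is exactly the form the conclusion demands.

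Next I would pass to the stopped expectation. Because $\xi_t \to \pm \ind_V$ almost surely by Lemma~\ref{lem:convergence}, the process $X_t$ eventually leaves the interval $[-\ep N, (1 - \ep) N]$, so $T_\ep < \infty$ almost surely. Applying the optional stopping theorem at the bounded times $T_\ep \wedge n$ and then letting $n \to \infty$ by bounded convergence (the supermartingale lies in $[c_\ep^{-N}, c_\ep^{N}]$) yields $E(c_\ep^{-Y_{T_\ep}}) \leq E(c_\ep^{-Y_0}) \leq c_\ep^{\ep N / 2}$. On the event $\{T_\ep = T_\ep^-\}$, the definition of $T_\ep^-$ forces $Y_{T_\ep} = X_{T_\ep^-} < -\ep N$, so $c_\ep^{-Y_{T_\ep}} > c_\ep^{\ep N}$, giving
\[
  c_\ep^{\ep N}\, p_\ep^- \leq E\bigl(c_\ep^{-Y_{T_\ep}}\bigr) \leq c_\ep^{\ep N / 2},
\]
from which $p_\ep^- \leq c_\ep^{-\ep N / 2}$ follows.

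The substantive analytic work has already been packaged into Lemmas~\ref{lem:Y} and~\ref{lem:calculus}, so the main obstacle in this lemma is purely bookkeeping: verifying that optional stopping may be used at the unbounded but almost surely finite time $T_\ep$. This is routine in view of the uniform boundedness of $c_\ep^{-Y_t}$, and I do not expect any real difficulty.
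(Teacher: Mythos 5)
Your proposal is correct and follows essentially the same route as the paper: the supermartingale $c_\ep^{-Y_t}$ from Lemma~\ref{lem:Y}, the optional stopping theorem, the product computation of $E(c_\ep^{-X_0})$, and the calibration of $c_\ep$ via Lemma~\ref{lem:calculus}. Your endgame is in fact slightly cleaner than the paper's, since dropping the contribution of $\{T_\ep = T_\ep^+\}$ by nonnegativity of $c_\ep^{-Y_{T_\ep}}$ avoids the paper's two-sided conditional bounds and the ensuing ratio manipulation, and you also supply the (omitted in the paper) justification of optional stopping at the almost surely finite time $T_\ep$ via stopping at $T_\ep \wedge n$ and bounded convergence.
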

\begin{proof}
 Because the~$\xi_0 (x)$ are~$\uniform (-1, 1)$ and independent,
 $$ \begin{array}{rcl}
    \displaystyle E (c_{\ep}^{- Y_0}) & \n = \n &
    \displaystyle E (c_{\ep}^{- X_0}) = E \bigg(\prod_{x \in V} c_{\ep}^{- \xi_0 (x)} \bigg) =
    \displaystyle \prod_{x \in V} E (c_{\ep}^{- \xi_0 (x)}) = \bigg(\int_{-1}^1 \frac{c_{\ep}^{- u}}{2} \ du \bigg)^N \vspace*{8pt} \\ & \n = \n &
    \displaystyle \bigg(\bigg[- \frac{c_{\ep}^{- u}}{2 \ln (c_{\ep})} \bigg]_{-1}^1 \bigg)^N =
    \displaystyle \bigg(\frac{c_{\ep}}{2 \ln (c_{\ep})} - \frac{1}{2 c_{\ep} \ln (c_{\ep})} \bigg)^N = \bigg(\frac{c_{\ep}^2 - 1}{2c_{\ep} \ln (c_{\ep})} \bigg)^N. \end{array} $$
 In addition, because the increments of~$X_t$ are bounded by two,
 $$ c_{\ep}^{\ep N} \leq E (c_{\ep}^{- Y_{T_{\ep}}} \,| \,T_{\ep} = T_{\ep}^-) \leq c_{\ep}^{\ep N + 2} \quad \hbox{and} \quad
    c_{\ep}^{- (1 - \ep) N - 2} \leq E (c_{\ep}^{- Y_{T_{\ep}}} \,| \,T_{\ep} = T_{\ep}^+) \leq c_{\ep}^{- (1 - \ep) N}, $$
 which implies that, at time~$T_{\ep}$,
 $$ \begin{array}{rcl}
      E (c_{\ep}^{- Y_{T_{\ep}}}) & \n = \n &
      p_{\ep}^-  \,E (c_{\ep}^{- Y_{T_{\ep}}} \,| \,T_{\ep} = T_{\ep}^-) + p_{\ep}^+ \,E (c_{\ep}^{- Y_{T_{\ep}}} \,| \,T_{\ep} = T_{\ep}^+) \vspace*{4pt} \\ & \n = \n &
      p_{\ep}^-  \,E (c_{\ep}^{- Y_{T_{\ep}}} \,| \,T_{\ep} = T_{\ep}^-) + (1 - p_{\ep}^-) \,E (c_{\ep}^{- Y_{T_{\ep}}} \,| \,T_{\ep} = T_{\ep}^+) \vspace*{4pt} \\ & \n = \n &
      p_{\ep}^- \,(E (c_{\ep}^{- Y_{T_{\ep}}} \,| \,T_{\ep} = T_{\ep}^-) - E (c_{\ep}^{- Y_{T_{\ep}}} \,| \,T_{\ep} = T_{\ep}^+)) + E (c_{\ep}^{- Y_{T_{\ep}}} \,| \,T_{\ep} = T_{\ep}^+) \vspace*{4pt} \\ & \n \geq \n &
      p_{\ep}^- (c_{\ep}^{\ep N} - c_{\ep}^{- (1 - \ep) N}) + c_{\ep}^{- (1 - \ep) N - 2}. \end{array} $$
 Choosing~$c_{\ep} > 1$ as in Lemma~\ref{lem:Y} to ensure that~$c^{- Y_t}$ is a supermartingale, and applying the optional stopping theorem to this process~(see, e.g., \cite[Theorem 5.1]{lanchier_2017}), we deduce that
 $$ \begin{array}{rcl}
      p_{\ep}^- (c_{\ep}^{\ep N} - c_{\ep}^{- (1 - \ep) N}) + c_{\ep}^{- (1 - \ep) N - 2} \leq
      E (c_{\ep}^{- Y_{T_{\ep}}}) \leq
      E (c_{\ep}^{- Y_0}) = ((c_{\ep}^2 - 1) / 2 c_{\ep} \ln (c_{\ep}))^N. \end{array} $$
 Choosing~$c_{\ep} > 1$ smaller if needed to use Lemma~\ref{lem:calculus}, we conclude that
 $$ \begin{array}{rcl}
    \displaystyle p_{\ep}^- \leq \frac{((c_{\ep}^2 - 1) / 2 c_{\ep} \ln (c_{\ep}))^N - c_{\ep}^{- (1 - \ep) N - 2}}{c_{\ep}^{\ep N} - c_{\ep}^{- (1 - \ep) N}} \leq
    \displaystyle \bigg(\frac{c_{\ep}^2 - 1}{2c_{\ep} \ln (c_{\ep})} \bigg)^N c_{\ep}^{- \ep N} && \vspace*{8pt} \\ \hspace*{100pt} =
    \displaystyle \bigg(\frac{c_{\ep}^2 - 1}{2c_{\ep} \ln (c_{\ep}) \,c_{\ep}^{\ep / 2}} \bigg)^N c_{\ep}^{- \ep N / 2} \leq
    \displaystyle c_{\ep}^{- \ep N / 2} & \n \to \n & 0, \end{array} $$
 as the population size~$N \to \infty$.
\end{proof}


\end{document}